\documentclass[11pt,reqno]{amsart}
\usepackage[left=2cm,right=2cm,bottom=3cm]{geometry}%
\usepackage{amsmath,amssymb}
\usepackage{cite}
\usepackage[pagewise]{lineno}
\numberwithin{equation}{section}

\usepackage{amsmath,amssymb,amsthm}



\usepackage{url}
\usepackage{here}
\usepackage{enumitem}
\newtheorem{theorem}{Theorem}[section]
\newtheorem{lemma}[theorem]{Lemma}
\newtheorem{proposition}[theorem]{Proposition}

\theoremstyle{definition}

\newtheorem{definition}[theorem]{Definition}

\newcommand{\ZZ}{\mathbb Z}
\newcommand{\RR}{\mathbb R}
\newcommand{\PP}{\mathbb P}

\newcommand{\QQ}{\mathbb Q}
\newcommand{\CC}{\mathbb C}

\newcommand{\Eff}{\overline{\operatorname{Eff}}}
\newcommand{\Mov}{\overline{\operatorname{Mov}}}

\newcommand{\Nef}{\operatorname{Nef}}

\newcommand{\Pic}{\operatorname{Pic}}

\newcommand{\rat}{\dashrightarrow}

\newcommand{\Bir}{\operatorname{Bir}}

\newcommand{\arrow}{\rightarrow}



\title{Remark on a theorem of Oguiso}

\author[I. Kaur]{Inder Kaur}

\address{School of Mathematics and Statistics, University of Glasgow, G12 8QQ , UK}

\email{inder.kaur@glasgow.ac.uk}

\author[A. Prendergast-Smith]{Artie Prendergast-Smith}

\address{Department of Mathematical Sciences, Loughborough University, LE11 3TU, UK}

\email{a.prendergast-smith@lboro.ac.uk}

\thanks{A.P-S. and I.K. were supported by EPSRC grant EP/W026554/1.}

\date{}
 
\begin{document}

\begin{abstract}
  For a Calabi--Yau variety $X$, Oguiso \cite{Oguiso2018} gave a useful criterion for primitivity of a self-map of $X$ in terms of the associated linear map on the N\'eron--Severi space of $X$. In this short note, we prove a variant of Oguiso's criterion and use it to verify primitivity of a certain birational automorphism of a Calabi--Yau threefold, to which Oguiso's original criterion does not apply.  
\end{abstract}

\maketitle

\section{Introduction}
The purpose of this note is to prove a variant of a criterion of Oguiso \cite{Oguiso2018} verifying that a birational self-map of a Calabi--Yau variety is primitive. Roughly speaking, a map is primitive if it does not ``factor through'' a birational automorphism of a lower-dimensional variety; in studying dynamics of birational maps, this is a natural condition to impose in order to exclude uninteresting examples such as product maps. The precise defintion of primitivity was first formulated by Zhang \cite{Zhang09}, and is stated by Oguiso \cite{Oguiso2018} as follows:

\begin{definition}
Let $X$ be a projective variety. A {\it rational fibration} means a dominant rational map $\pi \colon X \rat B$ where $B$ is a projective variety and $\pi$ has connected fibres. The fibration $\pi$ is {\it nontrivial} if $0< \dim B <\dim X$.

  Let $f \colon X \rat X$ be a birational map. A rational fibration $\pi \colon X \rat B$ is called {\it $f$-equivariant} if there is a birational map $f_B \colon B \rat B$ such that $\pi~\circ~f~=~f_B~\circ~\pi$. The birational map $f$ is called {\it primitive} if there does not exist a non-trivial $f$-equivariant rational fibration $\pi \colon X \rat B$.  
\end{definition}

For a Calabi--Yau variety $X$, Oguiso \cite[Theorem 1.2]{Oguiso2018} gave a useful criterion for primitivity of a birational map $f: X \rat X$ in terms of the associated linear map $f^*$ on the N\'eron--Severi space of $X$. The precise statement is Theorem \ref{theorem-oguisocriterion} below; roughly speaking, assuming general conjectures of minimal model theory, the criterion asks that $f^*$ should have no nontrivial invariant subspaces defined over $\QQ$. In Theorem \ref{theorem-oguisocriterionstrengthened}, we will prove a variant of Oguiso's criterion, replacing his linear algebraic criterion on $f^\ast$ by one involving convex geometry.

We will also be interested in the dynamical complexity of a birational map, as measured by Dinh--Sibony's notion of dynamical degrees \cite{DS05}. To keep our presentation simple, we only define the first dynamical degree of a birational map $f \colon X \rat X$, and moreover give a definition that is valid only in the special case when $f$ is an isomorphism in codimension 1.

\begin{definition}
  Let $f \colon X \rat X$ be a birational map which is an isomorphism in codimension 1, and let $f^\ast \colon N^1(X) \arrow N^1(X)$ be the linear map induced by pullback of divisors. The {\it first dynamical degree} $d_1(f)$ is the spectral radius of the map $f^\ast$, in other words,
  \begin{align*}
    d_1(f) = \max \left\{ \vert \lambda_i(f^\ast) \vert \right\}
  \end{align*}
  where the maximum is taken over the set of all eigenvalues $\{ \lambda_i(f^\ast)\}$ of the linear map $f^\ast$. 
\end{definition}
As motivation, in the case where $f \colon X \arrow X$ is biregular, the Gromov--Yomdin theorem \cite{Gr03,Yo87} says that the topological entropy of $f$ equals
\begin{align*}
  l(f) & := \operatorname{log}\, \operatorname{max} \left\{ d_i(f) \mid i = 1,\ldots, \dim X -1 \right\};
\end{align*} in the birational case Dinh-Sibony showed that $l(f)$ is an upper bound for topological entropy. So maps with $d_1(f)>1$ can be considered as candidates for having positive topological entropy.

We now introduce some notation and terminology in order to give precise statements of Oguiso's criterion and our variant of it.

We work throughout over $\CC$. For a $\QQ$-factorial projective variety $X$, we write $N^1(X)_\QQ$ to denote the vector space of divisors with rational coefficients modulo numerical equivalence, and $N^1(X)$ to denote $N^1(X)_\QQ \otimes \RR$. The {\it pseudoeffective cone} $\Eff(X)$ means the closed cone in $N^1(X)$ generated by the classes of effective divisors. A divisor $D$ on $X$ is {\it movable} if the intersection of all effective divisors in the complete linear system $|D|$ has codimension at least 2 in $X$. The {\it closed movable cone} $\Mov(X)$ means the closed cone in $N^1(X)$ generated by the classes of movable divisors. 

A projective variety $X$ is a {\it minimal Calabi--Yau variety} if $X$ is $\QQ$-factorial and terminal, we have $h^1(O_X)=0$, and the canonical divisor $K_X$ is trivial. We say a minimal Calabi--Yau variety is {\it m-abundant} if, for any movable effective divisor $D$, there is another minimal Calabi--Yau variety $X^\prime$ and a birational map $g \colon X^\prime \rat X$ such that $g^*D$ is semi-ample on $X^\prime$. By the existence of log minimal models and the log abundance theorem in dimension 3 \cite{Ka92, Sho03, KMM1994}, every minimal Calabi--Yau variety of dimension 3 is m-abundant.

We can now state Oguiso's criterion \cite[Theorem 1.2]{Oguiso2018}:
\begin{theorem}[Oguiso]\label{theorem-oguisocriterion}
  Let $X$ be a minimal Calabi--Yau variety of dimension at least $3$, of Picard number at least $2$, and which is m-abundant. Let $f \colon X \rat X$ be a birational map such that $f^*$ acts irreducibly on the $\QQ$-vector space $N^1(X)_\QQ$. Then $f$ is primitive. 
\end{theorem}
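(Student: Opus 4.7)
The plan is to argue by contradiction. Suppose $f$ is not primitive, so there is a nontrivial $f$-equivariant rational fibration $\pi\colon X \rat B$ with birational $f_B\colon B\rat B$ satisfying $\pi\circ f = f_B\circ \pi$. The goal is to produce a proper, nontrivial, $f^*$-invariant $\QQ$-subspace of $N^1(X)_\QQ$, contradicting irreducibility.

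First I would build a divisor class $D\in N^1(X)_\QQ$ out of $\pi$. Resolve the indeterminacy of $\pi$ by $\sigma\colon \widetilde{X}\to X$ so that $\widetilde{\pi} := \pi\circ\sigma$ is a morphism, pick an ample class $H_B\in N^1(B)_\QQ$, and set $D := \sigma_*\widetilde{\pi}^*H_B$. Because $X$ is $\QQ$-factorial, $D$ is $\QQ$-Cartier, and because the exceptional locus of $\sigma$ has codimension at least $2$ in $X$, the class $D$ is represented by an effective movable divisor: for $m\gg 0$ the base locus of $|mD|$ is swept into the indeterminacy locus of $\pi$. Now the m-abundance hypothesis applies and yields a minimal Calabi--Yau variety $X'$ together with a birational map $g\colon X'\rat X$ such that $g^*D$ is semi-ample on $X'$. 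Passing to the fibration attached to $g^*D$ (after Stein factorization if necessary) gives a morphism $\varphi\colon X'\to B'$ with connected fibres and an injection $\varphi^*\colon N^1(B')_\QQ \hookrightarrow N^1(X')_\QQ$ whose image $V'$ is a proper subspace: any ample class on $X'$ has strictly positive intersection with a curve in a general fibre of $\varphi$, whereas every class in $V'$ pairs trivially with such curves.

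Next I would transfer this subspace back to $X$. Since both $X$ and $X'$ are minimal Calabi--Yau (hence $\QQ$-factorial, terminal, with trivial canonical class), the birational map $g$ is an isomorphism in codimension $1$, so $g^*$ induces an honest isomorphism $N^1(X)_\QQ \to N^1(X')_\QQ$ and intertwines $f^*$ with $(f')^*$ for the conjugated birational self-map $f' := g^{-1}\circ f\circ g$. The fibration $\varphi$ is birationally equivalent to $\pi\circ g$, so the equivariance $\pi\circ f=f_B\circ\pi$ translates into an equivariance of $\varphi$ under $f'$ (for an induced birational self-map of $B'$). This forces $V'$ to be $(f')^*$-invariant, and its preimage $V := (g^*)^{-1}(V')\subset N^1(X)_\QQ$ is then a proper, nontrivial, $f^*$-invariant subspace, contradicting the assumption that $f^*$ acts irreducibly on $N^1(X)_\QQ$.

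The main obstacle is the compatibility bookkeeping in the penultimate step: namely, that after the birational modifications used to invoke m-abundance, the fibration $\varphi$ on $X'$ really is equivariant for the conjugated map $f'$, and that the subspace $V'$ really descends to an $f^*$-invariant subspace on $X$. This hinges on two inputs, both of which are available in the minimal Calabi--Yau setting: birational maps between such varieties are isomorphisms outside codimension two, and the fibration determined by a semi-ample class is unique up to birational equivalence on the base. Once those inputs are in hand, the N\'eron--Severi level argument is essentially formal.
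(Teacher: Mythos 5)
Your proposal attempts to run the ``produce an invariant subspace'' argument directly from an \emph{arbitrary} nontrivial $f$-equivariant fibration $\pi$, and the step you flag as ``compatibility bookkeeping'' --- that $V'=\varphi^*N^1(B')$ is $(f')^*$-stable --- is in fact the crux of the whole theorem, and the two inputs you cite do not deliver it. The equivariance $\varphi\circ f'=f_{B'}\circ\varphi$ only identifies $\varphi\circ f'$ and $\varphi$ as rational fibrations up to the \emph{birational} self-map $f_{B'}$ of the base, and pullback of divisor classes is not functorial through compositions with merely birational maps: $(f')^*\varphi^*\beta$ and $\varphi^*f_{B'}^*\beta$ can differ by classes (e.g.\ vertical divisors lying over the indeterminacy or contracted locus of $f_{B'}$) that need not lie in $\varphi^*N^1(B')$. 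Running your own mechanism on the transformed class, $(f')^*(g^*D)$ is again movable with the same Iitaka dimension, but m-abundance makes it semi-ample only on yet another model $X''$, whose semi-ample fibration maps to a possibly different birational model of the base (with possibly different Picard number); there is no formal reason the resulting pullback subspace of $N^1(X)\cong N^1(X'')$ coincides with $V$. Uniqueness of the Iitaka fibration identifies the \emph{fibrations} birationally, not the subspaces of $N^1$.

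This is precisely why Oguiso's proof does not proceed as you do. His Step 1 establishes the stability of $\pi^*N^1(B)$ only for equivariant fibrations whose general fibre satisfies $\kappa(\widetilde X_b)=0$, where the fibration and the subspace can be pinned down canonically enough for $f^*$ to preserve them. The remaining two steps, which your proposal omits entirely, exist to reduce the general case to that one: the Amerik--Campana theorem (plus irreducibility of $f^*$, used a second time to force the quotient $\rho\colon X\rat C$ to be trivial) shows two-way orbits of very general points are Zariski-dense, whence general fibres of any equivariant fibration are not of general type; only then does the relative Iitaka fibration over $B$ produce a new nontrivial equivariant fibration with $\kappa(\widetilde X_k)=0$, to which Step 1 applies. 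A proof that bypasses the orbit-density and relative-Iitaka machinery altogether, as yours does, would be a substantial simplification of Oguiso's argument, and the invariance gap above is where it breaks.
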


  
To state our variant, we fix some more terminology. Let $V$ be a vector space and $ g \colon V \arrow V$ a linear endomorphism. The {\it fixed subspace} of $g$ means the largest subspace $U \subset V$ such that $gu=u$ for all $u \in U$. A subspace $W \subset V$ is {\it $g$-stable} if $gw \in W$ for all $w \in W$. A face $F$ a cone $K$ is {\it proper} if $F\neq\{0\}$ and $F \neq K$. We will show the following:

\begin{theorem}\label{theorem-oguisocriterionstrengthened}
  Let $X$ be a minimal Calabi--Yau variety of dimension at least $3$, of Picard number at least $2$, and which is m-abundant. Let $f \colon X \rat X$ be a birational map such that:
  \begin{itemize}
  \item the fixed subspace of $f^*$ intersects the cone $\Eff(X)$ trivially;
  \item at least one of the following is true:
    \begin{itemize}
    \item  $\Mov(X)$ does not have a proper $f^*$-stable face defined over $\QQ$;
    \item $\Eff(X)$ does not have a proper $f^*$-stable face defined over $\QQ$.
    \end{itemize}
  \end{itemize}
  Then $f$ is primitive.
\end{theorem}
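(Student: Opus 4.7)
I would argue by contradiction, following the broad strategy of Oguiso's proof of Theorem \ref{theorem-oguisocriterion}. Suppose $f$ is not primitive, so there is a non-trivial $f$-equivariant rational fibration $\pi \colon X \rat B$ with birational $f_B \colon B \rat B$ satisfying $\pi \circ f = f_B \circ \pi$. Resolving the indeterminacy of $\pi$ via a birational morphism $p \colon W \to X$ so that $q := \pi \circ p$ is a morphism, I would pick an ample class $H_B$ on $B$ and consider $\alpha := p_\ast(q^\ast H_B) \in N^1(X)_\QQ$. Then $\alpha$ is a nonzero movable class, and $\alpha^{\dim X} = (H_B)^{\dim X} = 0$ (since $\dim B < \dim X$) shows $\alpha$ is not big. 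By equivariance $f^\ast \pi^\ast = \pi^\ast f_B^\ast$, so $V := \pi^\ast N^1(B)_\QQ$ is a proper, $f^\ast$-stable, $\QQ$-defined subspace of $N^1(X)_\QQ$ containing $\alpha$ but containing no big class.

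The heart of the argument is to upgrade $V$ to proper $f^\ast$-stable $\QQ$-faces of \emph{both} $\Mov(X)$ and $\Eff(X)$, which would directly contradict the second hypothesis of the theorem. The natural candidates are $\pi^\ast \Mov(B)$ and $\pi^\ast \Eff(B)$: each is a nonzero $f^\ast$-stable, $\QQ$-defined sub-cone containing $\alpha$, and each is properly contained in the ambient cone since no big class lies in $V$. To see that each is a face, one must show that whenever $\pi^\ast D$ decomposes as $E_1 + E_2$ with $E_i$ movable (respectively pseudoeffective), each $E_i$ is a pullback from $B$. The plan is to pair with a general curve $C$ in a fibre of $q$: such curves form a covering family on $W$, so non-negativity of movable classes on covering families, together with $\pi^\ast D \cdot C = 0$, forces each $E_i$ to be numerically trivial on general fibres and hence numerically equivalent to a pullback from $B$. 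The m-abundance hypothesis would enter here to guarantee that $\alpha$ is semi-ample on a minimal model of $X$, allowing $\pi$ to be treated as a genuine morphism in these intersection-theoretic arguments.

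If the face property holds for both cones the proof is complete. If it fails for one of them (most plausibly $\Eff(X)$, where the covering-curve argument is less clean), I would fall back on the first hypothesis: a Perron--Frobenius--type argument applied to the nonzero $f^\ast$-stable cone $V \cap \Eff(X)$ produces a real eigenvector of $f^\ast$ in $\Eff(X)$, and the additional rigidity coming from m-abundance together with the equivariance should force the corresponding eigenvalue to be $1$, giving a nonzero class in the fixed subspace of $f^\ast$ lying in $\Eff(X)$ and contradicting the first hypothesis.

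The main technical obstacle I anticipate is rigorously establishing the face property for the pseudoeffective cone: movable classes interact cleanly with covering curve families, but general pseudoeffective classes need not, so extracting the pullback decomposition of pseudoeffective summands of $\pi^\ast D$ will likely require passing through a minimal model of $X$ and invoking the full strength of the m-abundance hypothesis together with results from the minimal model program in dimension three. A secondary subtlety is identifying the exact point at which the fixed-subspace hypothesis is used rather than the face hypothesis, and in particular pinning down why the Perron--Frobenius eigenvalue must be $1$.
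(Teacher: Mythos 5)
There is a genuine gap, and it comes from skipping the dynamical part of Oguiso's argument. Your plan tries to extract a proper $f^\ast$-stable $\QQ$-face of $\Mov(X)$ or $\Eff(X)$ directly from an \emph{arbitrary} nontrivial equivariant fibration $\pi \colon X \rat B$. But the m-abundance hypothesis only lets you replace $X$ by a birational model on which $\pi$ becomes a genuine morphism when $\pi$ is (birationally) the Iitaka fibration of the movable class $\alpha = p_\ast q^\ast H_B$, and that requires $\kappa(\widetilde{X}_b)=0$ for the general fibre: otherwise the semi-ample fibration attached to $\alpha$ on the model need not recover $\pi$, and the whole intersection-theoretic face argument does not get off the ground. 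The paper therefore first reduces to this case: it uses Amerik--Campana to produce $\rho \colon X \rat C$ with $\rho \circ f = \rho$, notes that $f^\ast$ then acts as the \emph{identity} on $\rho^\ast N^1(C)$, and uses the fixed-subspace hypothesis (plus the observation that $\rho^\ast N^1(C)$ meets $\Eff(X)$ nontrivially when $C$ is not a point) to force $C$ to be a point; this gives Zariski-density of very general two-way orbits, hence (by Lo Bianco/Oguiso) that general fibres of equivariant fibrations are not of general type, so the relative Iitaka fibration over $B$ is a nontrivial equivariant fibration with fibres of Kodaira dimension zero. Only then is the stable face constructed, as $\pi^\ast \Nef(B)$ on the model, and it is simultaneously a proper stable face of both $\Mov(X)$ and $\Eff(X)$ (so either alternative in the second hypothesis suffices).

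Correspondingly, your proposed use of the first hypothesis is the step that would actually fail. You want a Perron--Frobenius argument on $V \cap \Eff(X)$ to produce an eigenvector in $\Eff(X)$ with eigenvalue $1$, and you concede you cannot see why the eigenvalue is $1$ --- indeed there is no reason it should be: the leading eigenvector of $f^\ast$ in $\Eff(X)$ generally has eigenvalue $d_1(f) > 1$ (as in the example of Section \ref{section-example}), so this route produces no contradiction with the fixed-subspace hypothesis. The eigenvalue $1$ in the correct argument comes for free from the identity $\rho \circ f = \rho$ supplied by Amerik--Campana, not from positivity of $V\cap\Eff(X)$. In short: you are missing the orbit-density/Iitaka-fibration reduction, and you have attached the fixed-subspace hypothesis to the wrong step of the proof.
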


Section \ref{section-proofs} of this note will outline Oguiso's proof of Theorem \ref{theorem-oguisocriterion}, and explain how the proof can be modified to give the statement of Theorem \ref{theorem-oguisocriterionstrengthened}. In Section \ref{section-example} we will apply this variant of the criterion to verify primitivity of a certain birational automorphism to which Oguiso's original form of the criterion does not apply. We also show that the first dynamical degree of this map is strictly greater than 1.

\section{Outline proof of Theorem \ref{theorem-oguisocriterion} and proof of Theorem \ref{theorem-oguisocriterionstrengthened}} \label{section-proofs}
In this section we outline Oguiso's proof of Theorem \ref{theorem-oguisocriterion}. In particular we identify those points in the proof where the assumption of irreducibility of $f^\ast$ is used. We  then explain how to modify the proof so that it works under the weaker assumptions of Theorem \ref{theorem-oguisocriterionstrengthened}. In various places in the proof, we take $\nu \colon \widetilde{X} \arrow X$ to be a resolution of the singularities of $X$ and of the indeterminacy of $\pi \colon X \rat B$, take $\widetilde{\pi}  = \pi \circ \nu \colon \widetilde{X} \arrow B$, and define $\widetilde{X}_b$ to be the fibre of $\widetilde{\pi}$ over a point $b \in B$. 

{\bf Step 1:} The first step is to prove the weaker statement that, under the hypotheses of Theorem \ref{theorem-oguisocriterion}, there can be no nontrival $f$-equivariant fibration $\pi \colon X \rat B$ such that $\kappa(\widetilde{X}_b)=0$. Oguiso shows that, in this situation, we can assume after  replacing $X$ with a birational model $X^\prime$ that $\pi$ is a morphism, and that the nontrivial subspace $\pi^* N^1(B) \subset N^1(X)$ is stable for the action of $f^\ast$. This contradicts the assumption of irreducibility of $f^\ast$, showing that no such $\pi$ exists.

{\bf Step 2:} The next step is to show that if $P$ is a very general point of $X$, then the two-way orbit $\left\{ f^n(P) \mid n \in \ZZ \right\}$ is well-defined and is a Zariski-dense subset of $X$. As a corollary of a result of Lo Bianco \cite[Proposition 4.5.1]{LB19} also proved by Oguiso \cite[Proposition 2.5]{Oguiso2018}, this implies that for $\pi \colon X \rat B$ as before, the general fibre $\widetilde{X}_b$ is not of general type. The proof of the first assertion uses a result of Amerik--Campana \cite{AC08} which shows that there is a dominant rational map $\rho \colon X \rat C$ to a smooth projective variety $C$ such that $\rho \circ f = \rho$ and $\rho^{-1}( \rho (P))$ is the Zariski closure of the two-way orbit of $P$, for very general $P$. This implies that $\rho^* N^1(C)$ is a well-defined subspace of $N^1(X)$ on which $f^\ast$ acts as identity, again contradicting irreducibility of $f^\ast$ unless $N^1(C)$ is trivial, i.e. unless $C$ is a point and the orbit of $P$ is Zariski-dense.

{\bf Step 3:} The final step of the proof is to consider $\pi \colon X \rat B$ a nontrivial $f$-equivariant fibration, and to take the relative Iitaka fibration over $B$ to get $g \colon X \rat K$. By Step 2 we know that $\widetilde{X}_b$ does not have general type, implying that $\dim K < \dim X$; also by definition we have $\dim K \geq \dim B$. So this is again a nontrivial $f$-equivariant rational fibration. Moreover, by construction of the Iitaka fibration it has the key property that $\kappa(\widetilde{X}_k)=0$ for a general point $k \in K$. The existence of such a fibration contradicts the conclusion of Step 1, and so the proof is complete.

To generalise Oguiso's proof, we need to show that Steps 1 and 2 above still work under the weaker hypotheses of Theorem \ref{theorem-oguisocriterionstrengthened}. Let us deal with Step 2 first. Keeping the notation as above, we have the following:
\begin{lemma}
Suppose $C$ is not a point. For any nonzero basepoint-free divisor $D$ on $C$, we have that $\rho^*(D)$ is a nonzero effective divisor on $X$. In particular, the subspace $\rho^* N^1(C)$ intersects the cone $\Eff(X)$ nontrivially.
\end{lemma}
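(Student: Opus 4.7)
The plan is to realise $\rho^{\ast}D$ concretely as the pushforward of an honest pullback through a resolution of indeterminacy. Let $\mu \colon \wt{X} \arrow X$ be a resolution of the singularities of $X$ and of the indeterminacy of $\rho$, so that $\wt{\rho} := \rho \circ \mu \colon \wt{X} \arrow C$ is a dominant morphism. For any nonzero effective Cartier divisor $D'$ on $C$, the pullback $\wt{\rho}^{\ast} D'$ is a nonzero effective Cartier divisor on $\wt{X}$ because $\wt{\rho}$ is dominant. Pushing forward, $\mu_{\ast}(\wt{\rho}^{\ast} D')$ is then an effective Weil divisor on $X$ whose class in $N^1(X)$ represents $\rho^{\ast}[D']$, and it is this object I want to show is nonzero for suitable $D' \in |D|$.

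The crucial step is to choose $D' \in |D|$ so that $\mu_{\ast}(\wt{\rho}^{\ast} D')$ is genuinely nonzero, i.e.\ so that not every prime component of $\wt{\rho}^{\ast} D'$ is $\mu$-exceptional. A prime divisor $E \subset \wt{X}$ appears in $\wt{\rho}^{\ast} D'$ precisely when $\wt{\rho}(E) \subseteq \mr{supp}(D')$. Among the finitely many $\mu$-exceptional prime divisors $E_1, \ldots, E_s$ on $\wt{X}$, those with $\wt{\rho}(E_i) = C$ cannot appear, since $\mr{supp}(D')$ is a proper subset of $C$. For each remaining $E_i$ I pick a point $p_i \in \wt{\rho}(E_i)$; by basepoint-freeness of $|D|$ there exists a member of $|D|$ missing $p_i$, so the locus of divisors in $|D|$ containing $p_i$ is a proper linear subspace of $|D|$. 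A sufficiently general $D' \in |D|$ therefore avoids every $p_i$ simultaneously and in particular contains no $\wt{\rho}(E_i)$. For such $D'$, no $\mu$-exceptional prime divisor is a component of $\wt{\rho}^{\ast} D'$, and hence $\mu_{\ast}(\wt{\rho}^{\ast} D')$ is a nonzero effective divisor on $X$ representing the class $\rho^{\ast}[D]$.

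The ``in particular'' clause follows immediately: a nonzero effective divisor on a projective variety has nonzero numerical class, for instance by pairing with the top power of an ample class to get a strictly positive intersection number. So $\rho^{\ast}[D]$ is a nonzero element of $\rho^{\ast} N^1(C) \cap \Eff(X)$. The only real obstacle in the plan is the handling of the $\mu$-exceptional divisors: basepoint-freeness of $|D|$ is precisely the hypothesis that makes the Bertini-style genericity argument above go through, and the rest of the construction is a routine unwinding of the definitions.
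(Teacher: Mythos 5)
Your proof is correct and takes essentially the same route as the paper: resolve the indeterminacy of $\rho$, use basepoint-freeness to choose a member of $|D|$ whose pullback has no $\mu$-exceptional components, and push forward to get a nonzero effective divisor representing $\rho^*[D]$. Your case analysis of the exceptional divisors is in fact a more careful spelling-out of the paper's one-line claim that one can choose a member of the basepoint-free system ``distinct from the union of all exceptional divisors.''
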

\begin{proof}
By definition $\rho^\ast=\nu_\ast \widetilde{\rho}^\ast D$ where $\nu \colon \widetilde{X} \arrow X$ is a resolution as before and $\widetilde{\rho} \colon \widetilde{X} \arrow C$ is the induced morphism. 
  
Now let $D$ be a basepoint-free divisor class on $C$. Then $\widetilde{\rho}^\ast D$ is basepoint-free on $\widetilde{X}$. In particular, we can choose an effecitve divisor in this class which is distinct from the union of all exceptional divisors of $\nu$. Then $\nu_\ast \widetilde{\rho}^\ast D$ is an effective and nonzero divisor on $X$. 
\end{proof}
We then get the conclusion of Step 2 above, under the weaker hypotheses of Theorem \ref{theorem-oguisocriterionstrengthened}. That is, if $f \colon X \rat X$ is a birational map such that the fixed subspace $f^*$ intersects the effective cone $\Eff(X)$ trivially, then for a very general point closed point $P \in X$ the points $f^n(P)$ are defined for all $n \in \ZZ$ and the two-way orbit $\left\{f^n(P) \mid n \in \ZZ \right\}$ is Zariski-dense in $X$.

Next we turn to Step 1. To adapt the proof to work under our weaker hypotheses, we note that in the above setup, the $f$-stable subspace $\pi^* N^1(B)$ contains an $f$-stable full-dimensional cone $K=\pi^* \Nef(B)$. The relative interior of $K$ consists of divisor classes which are semi-ample on the birational model $X^\prime$, and hence movable on $X$ itself. So if $B$ is not a point, the cone $K$ is a nonzero $f^*$-stable face of $\Mov(X)$ defined over $\QQ$. Moreover if $\dim B<\dim X$ then $X$ is covered by curves on which all divisors in $K$ have degree $0$, so no divisor whose class lies in $K$ can be big. Therefore in this case $K$ is also a proper $f^*$-stable face of $\Eff(X)$ defined over $\QQ$. Therefore, under the hypotheses of Theorem \ref{theorem-oguisocriterionstrengthened}, we conclude that there is no nontrivial $f$-equivariant fibration $\pi \colon X \rat B$ with $\kappa(\widetilde{X}_b)=0$.

Finally, Oguiso's argument in Step 3 does not use the assumption of irreducibililty of $f^\ast$, and so this step of the argument goes through unchanged. This completes the proof of Theorem \ref{theorem-oguisocriterionstrengthened}.

\section{Example}\label{section-example}
In this section, we give an example of a smooth Calabi--Yau variety $X$ of dimension 3 with a birational map $\varphi \colon X \rat X$ to which Oguiso's Theorem \ref{theorem-oguisocriterion} does not apply but Theorem \ref{theorem-oguisocriterionstrengthened} does. This criterion shows that $\varphi$ is a primitive birational map, and we will also see that it has first dynamical degree $d_1(\varphi)$ strictly greater than 1. 

Let $X$ be a general complete intersection of 3 hypersurfaces of degree $(1,1,1)$ in $\PP := \PP^2 \times \PP^2 \times \PP^2$. Bertini's theorem shows that $X$ is smooth, and the Lefschetz hyperplane theorem shows that $X$ is simply connected, in particular $H^1(O_X)=0$, and $H^2(O_X)=0$ also. By adjunction we have $K_X = (K_\PP)_{|X} \otimes \operatorname{det} N_{X/\PP} = O_X$. So $X$ is a smooth Calabi--Yau variety. Let $\pi_i \colon X \arrow \PP^2$ denote projection onto the $i$-th factor, and let $L_i=\pi_i^\ast(H)$ where $H$ is the class of a line in $\PP^2$. Note that $\Pic(X) = \bigoplus_{i=1}^3 \ZZ \cdot L_i$.

The following proposition gives the basic geometric properties that we need for the fibres of the morphisms $\pi_i$. The proofs are straightforward but tedious, so we defer them to the end of this section.

\begin{proposition}\label{prop-goodfibres}
The fibres of $\pi_i$ are 1-dimensional. For each $i$, there is an open set $U_i \subset \PP^2$ such that $\PP^2 \setminus U_i$ consists of finitely many points, and for $p \in U_i$, the fibre $\pi_i^{-1}(p)$ is reduced and irreducible.
\end{proposition}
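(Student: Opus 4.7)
My plan is to use the symmetry among the three $\PP^2$ factors of $\PP$ to reduce to the case $i=1$, then to describe each fibre of $\pi_1$ explicitly as a plane cubic curve, and finally to run a parameter-space argument on the moduli space $\mc{M}$ of triples of $(1,1,1)$-hypersurfaces.

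First I will write each defining equation in the form $F_i(x,y,z) = y^T M_i(x) z$, where $M_i(x)$ is a $3\times 3$ matrix of linear forms in $x$; then the fibre $\pi_1^{-1}(p)\subset \PP^2_y \times \PP^2_z$ is cut out by the three bilinear forms $y^T M_i(p) z$. For fixed $y$, the set of admissible $z$ is the kernel of the $3\times 3$ matrix $N(p,y)$ whose rows are $y^T M_i(p)$. Thus projection to $\PP^2_y$ identifies $\pi_1^{-1}(p)$, over points where $N(p,y)$ has rank exactly $2$, with the plane cubic curve
\[
C(p) \;=\; \left\{ y \in \PP^2 : \det N(p,y) = 0 \right\}.
\]
Whenever $C(p)$ is a nonzero cubic and the rank of $N(p,y)$ drops to $\leq 1$ only on a finite subset of $C(p)$, the fibre is one-dimensional and birational to $C(p)$, so the fibre inherits the reducedness and irreducibility properties of the cubic.

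The key step is then to consider the rational map
\[
\Phi \colon \mc{M} \times \PP^2_p \rat \PP^9, \qquad (s, p) \mapsto C(s, p),
\]
to the parameter space of plane cubics. A direct calculation, realizing each degree-$3$ monomial in $y$ as the determinant of a suitable triple of elementary matrices, will show that $s \mapsto C(s,p)$ is surjective onto $\PP^9$ for every fixed $p$, hence $\Phi$ is dominant. Writing $B \subset \PP^9$ for the codimension-$2$ locus of reducible or non-reduced plane cubics, whose top-dimensional component is the $7$-dimensional locus of line-plus-conic cubics, dominance of $\Phi$ gives $\dim \Phi^{-1}(B) \leq \dim \mc{M}$. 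Thus the projection $\Phi^{-1}(B) \to \mc{M}$ has generically finite fibre, so for general $s \in \mc{M}$ the set of $p$ with $C(s,p)\in B$ is finite.

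Ruling out higher-dimensional fibres will be a similar parameter-space exercise. The fibre $\pi_1^{-1}(p)$ can have dimension $\geq 2$ only if either $C(s,p)$ degenerates identically to zero (imposing $10$ independent conditions, far more than needed to give a non-dominant locus in $\mc{M}\times \PP^2_p$), or $N(p,y)$ has rank $\leq 1$ on all of the nonzero cubic $C(s,p)$, in which case the fibre contains a two-dimensional subvariety $V \subset \PP^2_y \times \PP^2_z$. Enumerating the possible classes $a H_y^2 + b H_y H_z + c H_z^2$ of such a $V$ and running an incidence-variety count for each, each resulting bad locus in $\mc{M} \times \PP^2_p$ will have codimension strictly greater than $2$, hence does not dominate $\mc{M}$. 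The main obstacle is the parameter-space bookkeeping: verifying the dominance of $\Phi$, identifying the correct codimensions of the various degeneracy strata in $\PP^9$ and in the rank stratification of $N(p,y)$, and enumerating the possible two-dimensional fibre components; but each individual step is routine.
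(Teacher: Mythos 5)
Your determinantal reformulation --- projecting the fibre to the plane cubic $C(p)=\{\det N(p,y)=0\}$ and working in the parameter space $\PP^9$ of plane cubics --- is a genuinely different route from the paper, which instead treats $\pi_1^{-1}(p)$ directly as a complete intersection curve in $\PP^2\times\PP^2$, parametrised by a point of $Gr(3,H^0(O(1,1)))$, and combines Kleiman's transversality theorem for the transitive $GL(9)$-action on that Grassmannian with a direct enumeration (Lemma \ref{lemma-codim2}) showing that the degenerate curves form a codimension-$2$ locus there. Unfortunately your key step has a genuine gap: dominance of $\Phi\colon\mc{M}\times\PP^2\rat\PP^9$ does \emph{not} imply $\dim\Phi^{-1}(B)\le\dim\mc{M}$. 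Fibre dimension of a dominant map is only upper semicontinuous, so the preimage of the codimension-$2$ locus $B$ can have codimension smaller than $2$ if the fibres of $\Phi$ jump over $B$ (compare a blow-up, where the preimage of a point is a divisor). Since for each fixed $p$ the restriction map $s\mapsto(F_1^p,F_2^p,F_3^p)$ is a surjective linear map onto $H^0(O(1,1))^{\oplus 3}$ with equidimensional fibres, what you actually need is that the locus of \emph{triples of bilinear forms} whose determinant is reducible or non-reduced has codimension at least $2$ in $(\CC^9)^3$ --- equivalently, that the fibres of the determinant map $N\mapsto\det N$ do not jump too much over $B$. This is not automatic (the family of determinantal representations of a plane cubic genuinely depends on the cubic), and establishing it amounts to the same kind of case-by-case count of degenerate curves that the paper carries out in Lemma \ref{lemma-codim2}; note also that Kleiman's theorem, which supplies the missing dimension control in the paper's version of the argument, is not available for your target $\PP^9$, on which $PGL_3$ does not act transitively.

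A second, more easily repaired, oversight: even when $C(p)$ is a smooth cubic, the fibre $\pi_1^{-1}(p)$ need not be irreducible. At any point $y_0\in C(p)$ with $\operatorname{rank}N(p,y_0)\le 1$ the fibre contains the extra component $\{y_0\}\times\PP\bigl(\ker N(p,y_0)\bigr)$, a line or a plane contracted by the projection to $\PP^2_y$, so the fibre is reducible (or even two-dimensional) regardless of the nature of $C(p)$; you only discuss the rank-$\le 1$ locus in the context of two-dimensional fibres. The fix is routine --- the rank-$\le 1$ locus is cut out by the $2\times 2$ minors, hence has codimension $4$ in the fourfold $\PP^2_p\times\PP^2_y$ for general $s$, so only finitely many $p$ are affected --- but it must be added to the list of degeneracy strata you track before the reducedness and irreducibility of the fibre can be read off from the cubic.
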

By adjunction, the smooth fibres of each of the maps $\pi_i \colon X \arrow \PP^2$ are elliptic curves. For each $i$, let $X_{\eta_i}$ denote the generic fibre of the morphism $\pi_i$. We have
\begin{align*}
  \Pic(X_\eta) \cong \Pic(X)/\operatorname{Vert}(\pi_i)
\end{align*}
where $\operatorname{Vert}(\pi_i)$ denote the subgroup of $\Pic(X)$ spanned by effective divisors $D$ such that $\pi_i(D) \neq \PP^2$. By Proposition \ref{prop-goodfibres} any such $D$ is a multiple of $L_i$, so $\Pic(X_\eta) \cong \Pic(X)/L_i$.

Let $E_{ij}$ denote the restriction of the line bundle $L_j$ to $X_{\eta_i}$. For $j \neq i$ we have
\begin{align*}
  L_j \cdot L_i^2 \cdot (L_1+L_2+L_3)^3&=3
\end{align*}
so $E_{ij}$ is a line bundle of degree 3 on $X_{\eta_i}$.

Now let $i,j,k$ be any ordering of the indices $1,2,3$. Then the line bundle $E_{ij}-E_{ik}$ has degree $0$ on $X_{\eta_i}$. In general for a curve $C$ and $y \in \Pic^0(C)$, translation by $y$ acts on $\Pic(C)$ by the formula
\begin{align*}
  x &\mapsto x + \left( \operatorname{deg} x \right) y.
\end{align*}
In particular taking $C=X_{\eta_i}$ and $y=E_{ij}-E_{ik}$ we have
\begin{align*}
  E_{ij} \mapsto 4E_{ij} - 3E_{ik}\\
  E_{ik} \mapsto 3E_{ij} - 2E_{ik}
\end{align*}
The translation action of $\Pic^0(X_{\eta_i})$ on $X_{\eta_i}$ extends to a birational action on $X$. We denote by $\varphi_{ijk} \colon X \rat X$ the birational map corresponding to translation by $E_{ij}-E_{ik}$. Since $X$ is smooth and $K_X$ is trivial, by \cite[Theorem 3.52]{KM98} the map $\varphi_{ijk}$ in fact extends to a pseudo-automorphism of $X$ over $\PP^2$,  that is, a birational automorphism which is an isomorphism in codimension 1 and preserves the fibration $\pi_i$.

Fix $i=1$, $j=2$, $k=3$. By the previous displayed equations the linear map $(\varphi_{123})_*$ on $N^1(X)$ is represented by a matrix of the form
\begin{align*}
  M_{123} &= 
  \begin{pmatrix}
    1 & m & n\\
    0 & 4 & 3\\
    0 & -3 & -2
  \end{pmatrix}
\end{align*}
for some integers $m,n$.

If now $M_{ijk}$ represents the linear map $(\varphi_{ijk})_*$ we observe that on one hand, $M_{123}^{-1} =M_{132}$,  while on the other hand, $M_{132} = T_{23}M_{123}T_{23}$ where $T_{23}$ is the permutation matrix corresponding to the transposition $(23)$. This implies that $m=2n$.

To determine the missing integer $m$, we can proceed as follows. Let $H$ denote a general line in $\PP^2$, and let $S=\pi_1^{-1}(H)$. Then $S$ is a smooth surface. The map $\varphi_{123}$ preserves $H$ and hence restricts to an automorphism of $S$, which we denote by $\varphi^S$. Denote the restriction of the line bundle $L_i$ to $S$ by $\Lambda_i$. For $i=1,2,3$ we have $\varphi^S(\Lambda_i) = \varphi_{123}(L_i)_{|S}$. So in particular we have $\varphi^S(\Lambda_2)=m\Lambda_1+4\Lambda_2-3\Lambda_3$. Since $\varphi^S$ is an automorphism of a smooth surface, it preserves intersection numbers, so we get
\begin{align*}
 (m\Lambda_1+4\Lambda_2-3\Lambda_3)^2 &= \Lambda_2^2=3
\end{align*}
hence $m=12$. Since $m=2n$ this implies $n=6$, so we have
\begin{align*}
  M_{123} &= 
  \begin{pmatrix}
    1 & 12 & 6\\
    0 & 4 & 3\\
    0 & -3 & -2
  \end{pmatrix}
\end{align*}
Identical arguments give that $(\varphi_{231})_\ast$ and $(\varphi_{312})_\ast$ are represented respectively by the matrices
\begin{align*}
  M_{231} =
    \begin{pmatrix}
     -2 &0& -3 \\
      6&1& 12 \\
     3 &0&4 
    \end{pmatrix},&\
    M_{312} =
    \begin{pmatrix}
      4&3&0\\
      -3&-2&0\\
      12&6&1
    \end{pmatrix}.
\end{align*}
After this paper was completed, we discovered that these matrices had been computed previously by Hoff--Stenger--Y\'{a}\~{n}ez \cite[Example 4.2]{HSY22} by a different method.

Finally, the birational map of $X$ that we are interested in is
\begin{align*}
  \varphi &\colon X \rat X\\
  \varphi &= \varphi_{123} \circ \varphi_{231} \circ \varphi_{312}.
\end{align*}
The pullback map $\varphi^\ast \colon N^1(X) \arrow N^1(X)$ is then represented by the matrix
\begin{align*}
  M &= \left( M_{123} M_{231} M_{312} \right)^{-1}\\
  &= \begin{pmatrix}
    -44 & -330 & -615\\
    60 & 451 & 840\\
    165 & 1230 & 2296
  \end{pmatrix}
\end{align*}
We can now verify the claimed properties of our example:
\begin{proposition}
  The birational map $\varphi \colon X \rat X$ is primitive with first dynamical degree $d_1(\varphi) >1$.
\end{proposition}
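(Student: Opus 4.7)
The plan is to apply Theorem~\ref{theorem-oguisocriterionstrengthened} for primitivity, and to read off $d_1(\varphi)$ from the eigenvalues of $M$. First I would compute the characteristic polynomial of $M$: a direct expansion gives $\det M = 1$, $\operatorname{tr} M = 2703$, and sum of $2\times 2$ principal minors equal to $2703$, so
\[ p(\lambda) = \lambda^3 - 2703\lambda^2 + 2703\lambda - 1 = (\lambda - 1)(\lambda^2 - 2702\lambda + 1). \]
The discriminant of the quadratic factor is $2702^2 - 4 = 2^4 \cdot 3^3 \cdot 5^2 \cdot 13^2$, which is not a perfect square, so $\lambda^2 - 2702\lambda + 1$ is irreducible over $\QQ$. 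Its roots form a reciprocal pair $\alpha$, $1/\alpha$ with $\alpha = 1351 + \sqrt{1351^2 - 1} > 1$, and this immediately gives $d_1(\varphi) = \alpha > 1$.

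Next I would enumerate the $\QQ$-rational $\varphi^*$-invariant subspaces of $N^1(X)$. Because $p$ factors over $\QQ$ as $(\lambda - 1)q(\lambda)$ with $q$ irreducible, the only proper nonzero such subspaces are the $1$-eigenline $V_1 = \ker(\varphi^* - I)$ and the complementary $2$-plane $V_2 = \ker q(\varphi^*)$. Solving $(M - I)v = 0$ yields $V_1 = \RR \cdot (L_1 - 2L_2 + L_3)$, and computing $(\varphi^* - I) L_i$ gives explicit $\QQ$-generators for $V_2$ from which one can check that $L_2 + 2L_3 \in V_2$.

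The main verification, and where I expect the crux of the argument to lie, is condition~(A) of Theorem~\ref{theorem-oguisocriterionstrengthened}: namely, $V_1 \cap \Eff(X) = 0$. My approach is to detect non-effectivity via moving-curve intersections. By Proposition~\ref{prop-goodfibres}, the general fibre of each $\pi_i$ is an irreducible curve and these fibres sweep out $X$, so $L_i^2$ is a strongly movable curve class; consequently any pseudo-effective divisor has non-negative intersection with each $L_i^2$. Using the intersection numbers on $X$ (which follow from $L_i^3 = 0$ in $\PP^2$ and $[X] = (L_1 + L_2 + L_3)^3$ on $\PP$, giving $L_i^2 \cdot L_j = 3$ for $i \neq j$ and $L_1 \cdot L_2 \cdot L_3 = 6$), I compute $(L_1 - 2L_2 + L_3) \cdot L_3^2 = -3$ and $(-L_1 + 2L_2 - L_3) \cdot L_2^2 = -6$, so neither of $\pm(L_1 - 2L_2 + L_3)$ is pseudo-effective.

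Finally, for condition~(B) I would show $\Eff(X)$ has no proper $\varphi^*$-stable face defined over $\QQ$. Such a face must span either $V_1$ or $V_2$; the first is already ruled out by~(A). For $V_2$, note that $L_2 + 2L_3$ is nef (a nonnegative combination of the semi-ample classes $L_2, L_3$) and has cube $(L_2 + 2L_3)^3 = 6 L_2^2 L_3 + 12 L_2 L_3^2 = 54 > 0$, so it is big and lies in the interior of $\Eff(X)$. Hence $V_2 \cap \Eff(X)$ contains an interior point and cannot be a proper face. With (A) and (B) verified, Theorem~\ref{theorem-oguisocriterionstrengthened} delivers primitivity of $\varphi$.
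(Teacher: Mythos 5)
Your proposal is correct and reaches the same conclusion by the same high-level strategy as the paper --- compute the spectrum of $M$ for the dynamical degree, then apply Theorem~\ref{theorem-oguisocriterionstrengthened} --- but your verifications of the two hypotheses are genuinely different, and in places cleaner. For the fixed-subspace condition, the paper only records the qualitative criterion that a class with two strictly negative coefficients cannot be pseudoeffective; since $D_{fixed}=L_1-2L_2+L_3$ has only one negative coefficient, the paper must additionally invoke birational invariance of $\Eff(X)$ and compute $(\varphi_{123})_*D_{fixed}=-17L_1-L_2+4L_3$. You instead use the actual intersection numbers $L_i^2\cdot L_j=3$, $L_i^3=0$ to get $D_{fixed}\cdot L_3^2=-3$ and $(-D_{fixed})\cdot L_2^2=-6$ directly, which avoids that detour while resting on the same geometric fact (fibres of $\pi_i$ cover $X$). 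For the no-stable-face condition, the paper dualizes: a $2$-dimensional stable face would force a $\QQ$-rational eigenline of $(\varphi^*)^\vee$ containing a nef curve class, excluded by pairing $L_1^\vee-2L_2^\vee+L_3^\vee$ against the effective $L_i$. You instead exhibit the nef and big class $L_2+2L_3$ inside $V_2$ (correct: it is annihilated by the dual fixed vector $(1,-2,1)$, and $(L_2+2L_3)^3=54>0$), so $V_2$ meets the interior of $\Eff(X)$ and hence cannot be the span of a proper face; this is valid and arguably more transparent. Your reduction of the face analysis to the two subspaces $V_1$, $V_2$ via irreducibility of the quadratic factor is the right organizing principle. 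Two trivial slips that affect nothing: the discriminant of $\lambda^2-2702\lambda+1$ is $2702^2-4=2^6\cdot 3^3\cdot 5^2\cdot 13^2$ (you dropped a factor of $4$), still a non-square because of the odd power of $3$; and one should note, as the paper does, that $\varphi$ is a composition of pseudo-automorphisms and hence an isomorphism in codimension $1$, so that the spectral-radius definition of $d_1$ applies.
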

\begin{proof} Using the matrix $M$ above we compute that the characteristic polynomial of $\varphi^\ast$ is
\begin{align*}
  \chi(\varphi^\ast)(t) &= \det \left( \varphi^\ast - t \cdot \operatorname{Id} \right) \\
  &= 1-2703t+2703t^2-t^3
\end{align*}
and the eigenvalues are
\begin{align*}
  \lambda_1 = 1, \ \lambda_{2,3} = 1351 \pm 780 \sqrt3.
\end{align*}
In particular we get that the first dynamical degree $d_1(\varphi)$ of the birational map $\varphi \colon X \rat X$ is
  \begin{align*}
d_1(\varphi) &=  1351 + 780 \sqrt 3.
  \end{align*}
  It remains to prove that $\varphi$ is a primitive birational map; for this we use Theorem \ref{theorem-oguisocriterionstrengthened}. Note that since $f^\ast$ has the rational eigenvalue $\lambda_1=1$, it does not act irreducibly on $N^1(X)_\QQ$ and so Theorem \ref{theorem-oguisocriterion} does not apply.

To show that Theorem \ref{theorem-oguisocriterionstrengthened} applies in our example, we first verify the condition concerning the fixed subspace of $\varphi^\ast$. The fixed subspace is of $\varphi^\ast$ is $1$-dimensional, spanned by the divisor class
\begin{align*}
  D_{fixed} &= L_1-2L_2+L_3.
\end{align*}

\begin{lemma}
Let $r$ be a nonzero real number. Then $rD_{fixed} \notin \Eff(X)$.
\end{lemma}
\begin{proof}
 First consider a divisor class $D=aL_1+bL_2+cL_3$ with $a,b,c \in \RR$. We claim that if any two of $a,b,c$ are strictly negative, then $D \notin \Eff(X)$. To see this, suppose for simplicity that $a<0$ and $b<0$. Let $C_3$ be any fibre of $\pi_3 \colon X \arrow \PP^2$. Then  $L_1 \cdot C_3>0$ and $L_2 \cdot C_3>0$, while $L_3 \cdot C_3=0$. So we have
  \begin{align*}
    D \cdot C_3 &= aL_1\cdot C_3 + bL_2\cdot C_3\\
    <0.
  \end{align*}
  On the other hand, since curves algebraically equivalent to $C_3$ cover $X$, if $\Delta$ is any effective divisor on $X$ we have $\Delta \cdot C_3 \geq0$. The same inequality therefore holds for any class $\Delta \in \Eff(X)$, showing that such a class $D$ cannot belong to $\Eff(X)$. 

  Now consider the class $D_{fixed}=L_1-2L_2+L_3$. If $r<0$, then $rD_{fixed}$ has two negative coefficients, so by the previous paragraph $rD_{fixed} \notin \Eff(X)$. If $r>0$, then $rD_{fixed} \in \Eff(X)$ if and only if $D_{fixed} \in \Eff(X)$. Since the cone $\Eff(X)$ is preserved by birational automorphisms, this implies that $\varphi_* D_{fixed} \in \Eff(X)$ for every $\varphi \in \Bir(X)$. But using the matrix $M_{123}$ above we compute that $(\varphi_{123})_* D_{fixed} = -17L_1-L_2+4L_3$, which by the previous paragraph cannot be in $\Eff(X)$. 
\end{proof}
This proves that the birational map $\varphi$ satisfies the first condition of Theorem \ref{theorem-oguisocriterionstrengthened}. To prove the second criterion, we can argue as follows. A 1-dimensional $\varphi^\ast$-stable face of $\Eff(X)$ would span a 1-dimensional eigenspace, but we have already seen that the unique nontrivial $\QQ$-eigenspace is the fixed subspace, which does not span a face of $\Eff(X)$ say. So we can restrict our attention to stable faces of dimension 2. If $F$ is a 2-dimensional $\varphi^\ast$-stable face of $\Eff(X)$, then its linear span is a 2-dimensional stable subspace $V_F \subset N^1(X)$. Consider the dual linear map $\left(\varphi^\ast\right)^\vee$ on $\left(N^1(X)\right)^\vee=N_1(X)$. The annhilator $V_F^\perp$ of $V_F$ would then be a 1-dimensional eigenspace of $\left(\varphi^\ast \right)^\vee$ defined over $\QQ$; moreover, the pseudoeffective cone $\Eff(X)$ is dual to the cone of nef curves, so this 1-dimensional eigenspace would contain the class of a nef curve. 

Computation shows that the only such eigenspace is spanned by the vector $v=L_1^\vee-2L_2^\vee+L_3^\vee$, where $\{L_1^\vee,L_2^\vee,L_3^\vee\}$ is the dual basis to $L_1,L_2,L_3$. Any nonzero multiple of $v$ is therefore negative on at least one of the effective divisors $L_1, L_2, L_3$, so this eigenspace cannot contain the class of a nef curve. By the last paragraph, this shows that there is no 2-dimensional $\varphi^\ast$-stable face of $\Eff(X)$ defined over $\QQ$.
\end{proof}
To finish, we give the proofs of the properties of the fibrations $\pi_i$ that were claimed in Proposition \ref{prop-goodfibres}. Recall that $X$ is a general complete intersection of 3 hypersurfaces of degree $(1,1,1)$ in $\PP^2 \times \PP^2 \times \PP^2$. To fix notation say that $F_1,\, F_2, \, F_3 \in H^0\left(\PP^2 \times \PP^2 \times \PP^2, O(1,1,1)\right)$ and
\begin{align*}
  X &= \left\{ \left([X_0,X_1,X_2],[Y_0,Y_1,Y_2],[Z_0,Z_1,Z_2]\right) \mid F_\alpha(X_i,Y_j,Z_k)=0 \text{ for } \alpha,\, i,\, j,\, k=1,2,3\right\}.
\end{align*}
For $i=1,2,3$, let $\pi_i \colon X \arrow \PP^2$ denote projection onto the $i$-th factor. To keep notation simple we write the required proofs for the case $i=1$; identical proofs work for $i=2$ and $i=3$. 
\begin{proposition}
The fibres of $\pi_1$ are 1-dimensional.  
\end{proposition}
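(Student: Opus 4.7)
My plan is a parameter-count argument. Let $V := H^0(\PP, \mc{O}(1,1,1))$, so the proposition asks us to verify that for $(F_1, F_2, F_3) \in V^3$ general, every non-empty fibre of $\pi_1$ is exactly $1$-dimensional. Since general $(F_\alpha)$ give $X$ smooth irreducible of dimension $3$, every non-empty fibre $X_p := \pi_1^{-1}(p)$ has dimension at least $1$, and the content is to rule out fibres of dimension $\geq 2$. I form the incidence variety
\[ \mc{B} := \left\{ (F_1, F_2, F_3, p) \in V^3 \times \PP^2 \, : \, \dim X_p \geq 2 \right\}, \]
and will show by a dimension count that the projection $\mc{B} \to V^3$ is not surjective.

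For each $p \in \PP^2$, restriction to the fibre $\{p\} \times \PP^2 \times \PP^2$ gives a surjective linear map $\rho_p \colon V \to W_p := H^0(\PP^2 \times \PP^2, \mc{O}(1,1))$ (since $H^0(\PP^2, \mc{O}(1))$ evaluates surjectively onto $\mc{O}(1)_p$), so the fibre of $\mc{B}$ over $p$ is the preimage in $V^3$ of
\[ \Sigma_p := \left\{(G_1, G_2, G_3) \in W_p^3 \, : \, \dim Z(G_1, G_2, G_3) \geq 2 \text{ in } \PP^2 \times \PP^2 \right\}. \]
Surjectivity of $\rho_p$ implies $\mr{codim}_{V^3} \rho_p^{-1}(\Sigma_p) = \mr{codim}_{W_p^3} \Sigma_p$, so once we know the right-hand side is at least $3$ for every $p$, we obtain $\dim \mc{B} \leq 2 + 3\dim V - 3 < 3\dim V$ and the projection to $V^3$ is not surjective, yielding the proposition.

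For the codimension bound, $\dim W_p = 9$. On the open stratum of $W_p^2$ where $(G_1, G_2)$ is a regular pair cutting out a smooth irreducible surface $Z(G_1, G_2) \subset \PP^2 \times \PP^2$, the Koszul resolution of the ideal sheaf gives $H^0(I_{Z(G_1, G_2)}(1,1)) = \CC\langle G_1, G_2\rangle$ (using vanishing of $H^i(\PP^2 \times \PP^2, \mc{O}(-1,-1))$ for $i \leq 1$ via K\"unneth), so the $G_3$ completing $(G_1, G_2)$ to a triple in $\Sigma_p$ form a $2$-plane in $W_p$, and this stratum contributes codimension $7$ to $\Sigma_p$ in $W_p^3$. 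The main obstacle is handling the degenerate strata of $(G_1, G_2)$, most delicately the case in which both $G_i$ have rank $1$ (as $3 \times 3$ matrices representing the $(1,1)$-forms) and share a common $(1,0)$ or $(0,1)$ factor, so that $Z(G_1, G_2)$ already has a $3$-dimensional component and every $G_3$ produces a triple in $\Sigma_p$. A direct case-by-case computation, stratifying by these ranks and shared-factor conditions, shows that each degenerate stratum has enough extra codimension in $W_p^2$ (in the cited case, at least $10$) to keep the total contribution to $\Sigma_p$ of codimension at least $3$ in $W_p^3$, completing the argument.
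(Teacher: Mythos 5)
Your argument is correct in outline but takes a genuinely different route from the paper. The paper packages the data of the three forms into a morphism $\phi \colon \PP^2 \arrow Gr(3,V)$, $p \mapsto \operatorname{span}(F_1^p,F_2^p,F_3^p)$ (first checking that the span is always $3$-dimensional), and then invokes Kleiman's transversality theorem for the transitive $GL(9)$-action on the Grassmannian to move $\phi(\PP^2)$ off the high-codimension locus of nets whose base locus contains a surface; the only subtlety there is identifying $GL(9)$-translates of $\phi(\PP^2)$ with changes of the forms $F_i$. You instead bound the dimension of the incidence variety $\mc{B} \subset V^3 \times \PP^2$ directly, using surjectivity of the restriction maps $\rho_p$ to transfer a codimension bound from $W_p^3$ to $V^3$; this is more elementary and self-contained (no Kleiman, no need to handle degenerate spans separately, since triples spanning at most a $2$-plane already sit in codimension $7$). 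Both proofs ultimately rest on the same geometric input --- that the locus of triples/nets of $(1,1)$-forms on $\PP^2\times\PP^2$ with $2$-dimensional common zero locus has codimension at least $3$ --- and both defer the detailed stratified count (the paper via ``a standard dimension count shows,'' you via ``a direct case-by-case computation shows''). One caution on your stratification: the degenerate strata are not exhausted by rank and shared-factor conditions on the individual $G_i$. You must also handle pairs $(G_1,G_2)$ that are individually general but whose complete intersection $Z(G_1,G_2)$ is reducible; for instance, if both contain $\ell_1\times\ell_2$ for lines $\ell_i\subset\PP^2$, then $H^0(I_{\ell_1\times\ell_2}(1,1))$ is $6$-dimensional, so the admissible $G_3$ form far more than a $2$-plane. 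That stratum still contributes codimension $4+3\cdot 6 = 22$ versus $\dim W_p^3=27$, i.e.\ codimension $5\geq 3$, so your bound survives, but it must be checked explicitly.
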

\begin{proof}
  Choose $p \in \PP^2$ and denote by $F_i^p$ the form obtained by substituting homogeneous coordinates of $p$ in place of $X_0,X_1,X_2$ in the forms $F_i$. So the fibre $\pi_1^{-1}(p)$ equals the intersection $F_1^p \cap F_2^p \cap F_3^p$.
  
  First we claim that, for all $p \in \PP^2$, the vector space $\operatorname{span}\left( F_1^p ,\, F_2^p, \, F_3^p \right) \subset H^0(\PP^2 \times \PP^2, O(1,1))$ has dimension 3. To see this, write $F_i = \sum \Lambda_{ijk} Y_j Z_k$ for linear forms $\Lambda_{ijk}$ in the variables $X_0,X_1,X_2$. Then 
  \begin{align*}
    \operatorname{dim} \operatorname{span} \left( F_1^p ,\, F_2^p, \, F_3^p \right) &= \operatorname{rank}
    \begin{pmatrix} \Lambda_{100} & \Lambda_{110} &\cdots &\Lambda_{122} \\
      \Lambda_{200} & \Lambda_{210} &\cdots &\Lambda_{222}\\
         \Lambda_{300} & \Lambda_{310} &\cdots &\Lambda_{322}
    \end{pmatrix}(p)
  \end{align*}
  Since the $\Lambda_{ijk}$ are general, the locus of points $p \in \PP^2$ where this matrix drops rank is empty.

Next, since $\operatorname{span}\left( F_1^p ,\, F_2^p, \, F_3^p \right)$ has dimension 3, $F_3^p$ say does not vanish identically on $F_1^p \cap F_2^p$. So the only way that the fibre can have dimension 2 is if all of the $F_i^p$ vanish on a surface $S \subset \PP^2 \times \PP^2$ which is a component of a reducible complete intersection of the zero-loci of two sections of $O(1,1)$. A standard dimension count shows that the subset $R \subset Gr(3,V)$ parametrising linear systems whose base locus contains such a component has codimension 7 in $Gr(3,V)$. 

  The linear system $\operatorname{span}(F_1,F_2,F_3)$ defines a morphism
  \begin{align*}
    \phi \colon \PP^2 &\arrow Gr(3,V) \\
    p &\mapsto \operatorname{span}(F_1^p,F_2^p,F_3^p)
  \end{align*}
  where $V=V=H^0(\PP^2 \times \PP^2, O(1,1))$. The group $GL(9)$ acts transitively on $Gr(3,V)$, so by Kleiman's theorem the general translate of $\phi(\PP^2)$ is transverse to the codimension-7 subset $R$ defined above, hence disjoint from it. By computation we see that the action of $GL(9)$ on $V$ corresponds to changing the choice of forms $F_1,F_2,F_3$. So the general translate equals $\phi(\PP^2)$ for a suitable choice of forms $F_1,F_2,F_3$. Hence, for a general choice of forms $F_1,F_2,F_3$, the fibres of $\pi_1$ are all 1-dimensional.
\end{proof}


\begin{lemma}\label{lemma-codim2}
  Let $M$ denote the parameter space of complete intersection curves in $\PP^2 \times \PP^2$ which are defined by the vanishing of 3 sections of $O(1,1)$. The locus of curves which are reducible or generically non-reduced has codimension 2 in $M$. 
\end{lemma}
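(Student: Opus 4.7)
My plan is to bound the dimension of the bad locus $B \subset M$ (consisting of $W$ such that $V(W)$ is reducible or generically non-reduced) by $16$. Since $M$ is an open subset of the Grassmannian of $3$-dimensional subspaces of $V := H^0(\PP^2 \times \PP^2, O(1,1))$, a $9$-dimensional space, we have $\dim M = 3 \cdot 6 = 18$, so the bound $\dim B \leq 16$ will give codimension $\geq 2$. The natural strategy is to stratify $B$ according to the type of an irreducible curve $C \subset \PP^2 \times \PP^2$ appearing (possibly with multiplicity) as a component of $V(W)$: for each such $C$, the condition $C \subset V(W)$ forces $W \subset V_C := \ker\bigl(V \to H^0(O(1,1)|_C)\bigr)$, so the corresponding stratum has dimension at most $\dim(\text{family of such } C) + \dim Gr(3, V_C)$.

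Enumerating the possible irreducible classes $[C]$ strictly less than $[V(W)] = 3(h_1^2 h_2 + h_1 h_2^2)$, the critical (boundary) cases are: $C = \{p\} \times L$ of class $h_1^2 h_2$ with family dimension $4$ and $h^0(O(1,1)|_C) = h^0(O_L(1)) = 2$, giving $\dim V_C = 7$ and stratum dimension at most $4 + 12 = 16$; symmetrically for $C = L \times \{q\}$; the graph of an isomorphism between two lines (class $h_1^2 h_2 + h_1 h_2^2$), with family dimension $7$ and $\dim V_C = 6$, giving total $7 + 9 = 16$; and a bidegree $(2,1)$ curve on $L_1 \times \PP^2$ (class $2 h_1^2 h_2 + h_1 h_2^2$), with family dimension $10$, $\dim V_C = 5$, total $10 + 6 = 16$. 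For larger classes, either the combined count drops below $16$, or $V_C$ becomes so large that a generic $W \subset V_C$ forces $V(W)$ to have dimension $> 1$ (so $W \notin M$); for instance, a cubic in $\{p\} \times \PP^2$ of class $3 h_1^2 h_2$ would force $V(W) \supset \{p\} \times \PP^2$, a surface.

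For the generically non-reduced case (an irreducible component with reduced support $C$ occurring with multiplicity $\geq 2$), I would use the conormal sequence $0 \to I_C^2 \to I_C \to N_C^\vee \to 0$ to translate the multiplicity condition into the requirement that the three sections of $N_C^\vee \otimes O(1,1)|_C$ obtained from a basis of $W$ have generic rank $< 3$. For $C = \{p\} \times L$ the relevant sheaf is $O_L(1)^{\oplus 2} \oplus O_L$ of total $h^0 = 5$, and the resulting rank-deficiency condition is codimension $3$ inside $Gr(3, V_C)$, yielding a stratum of codimension $\geq 5$ in $M$. The main obstacle is completing the exhaustive case analysis over all possible component classes $[C]$; each individual dimension count is a routine Riemann--Roch and restriction computation, but one has to verify each boundary case explicitly to confirm the codimension bound.
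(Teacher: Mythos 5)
Your proposal is correct in outline but takes a genuinely different route from the paper. The paper bounds the dimension of the family of reducible $(3,3)$ curves directly: it writes $F = C_{i,j} \cup C_{3-i,3-j}$ with two smooth rational components meeting in a length-$2$ scheme, and works out only the case $C_{2,3}\cup C_{1,0}$ in detail (there the $(1,0)$ component is forced to lie over the singular point of the image cubic, so that family is $16$-dimensional). You instead bound, for each type of single irreducible component $C$, the incidence variety $\{(C,W): C \subset V(W)\}$ by $\dim\{\text{family of }C\} + \dim Gr(3,V_C)$. This buys two things: it treats curves with arbitrarily many or singular components uniformly, since every reducible member of the family has a component of total degree at most $3$ and only your listed boundary classes are then needed; and your conormal/Jacobian formulation of generic non-reducedness is more transparent than anything in the paper's sketch. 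Your counts for the boundary cases check out ($4+12$, $7+9$, $10+6$), granted surjectivity of the restriction maps $V \to H^0(O(1,1)|_C)$, which does hold in each case. One caution: for components of total degree greater than $3$ the naive count can actually exceed $16$ --- for instance, for the graph of a degree-$3$ map between two lines (class $h_1^2h_2+3h_1h_2^2$) one gets $11 + \dim Gr(3,5) = 17$ --- so you genuinely need your second escape clause there; and the clause should be that \emph{every} $W \subset V_C$ cuts out something of dimension greater than $1$ (as indeed happens in that example, since such $W$ must vanish on the whole quadric $L_1\times L_2$), not merely the generic one, since otherwise a special sublocus could still contribute. The cleanest repair is simply to observe at the outset that only components of total degree at most $3$ ever need to be considered, which reduces the "exhaustive case analysis" you worry about to exactly the cases you have already done.
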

\begin{proof}[Sketch of proof]
  The variety $M$ is an open subset of the Grassmannian $Gr(3,V)$. The dimension of $V$ equals 9, so $M$ has dimension $18$.
  A curve parameterised by a general point of $M$ has bidegree $(3,3)$. Now suppose that $F$ is a reducible curve corresponding to a point of $M$; for simplicity assume it has 2 components. Then we have
  \begin{align*}
    F &= C_{i,j} \cup C_{3-i,3-j}
  \end{align*}
  where $C_{l,m}$ denotes a smooth rational curve of bidegree $(l,m)$ in $\PP^2 \times \PP^2$ and the intersection of the two components is a zero-dimensional scheme of length 2. We claim that for each possible type of reducible curve, the space of such curves has dimension at most 16.

  We give full details in the case $F=C_{2,3} \cup C_{1,0}$; other cases are similar. Let $pr_i \colon \PP^2 \times \PP^2 \arrow \PP^2$ denote the projections to the two factors. The space of smooth rational curves of bidegree $(2,3)$ has dimension 16. Let $C$ be such a curve: then $pr_2(C)$ is a cubic in $\PP^2$, which must be rational and therefore has a singular point $p$. Now let $C'$ be the component of bidegree $(1,0)$. It is contained in a fibre $pr_2^{-1}(q)$ for some $q$. But $C \cap C'$ has length 2 and $pr_2$ maps $C$ isomorphically onto its image away from $pr_2^{-1}(p)$, so we must have $C' \subset pr_2^{-1}(p)$. If $pr_2(C)$ has a node at $p$ then $C'$ must be the line joining the two points of $pr_2^{-1}(p) \cap C$; if $pr_2(C)$ has a cusp at $p$ then $C'$ must be the tangent line to $C$ at the point $pr_2^{-1}(p) \cap C$. In both cases $C'$ is uniquely determined by $C$, so the space of such curves has dimension $16$.\end{proof}
Now we can complete the proof of Proposition \ref{prop-goodfibres}.
\begin{proposition}[=Proposition \ref{prop-goodfibres}]
 The fibres of $\pi_1$ are irreducible and generically reduced in codimension 1. 
\end{proposition}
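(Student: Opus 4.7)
The plan is to mirror the Kleiman-transversality argument already used to establish the $1$-dimensionality of the fibres, replacing the codimension-$7$ ``bad'' locus there by the codimension-$2$ locus identified in Lemma~\ref{lemma-codim2}. Because the previous proposition shows that $\operatorname{span}(F_1^p, F_2^p, F_3^p)$ has dimension $3$ for every $p \in \PP^2$, we again have the well-defined classifying morphism
$$\phi \colon \PP^2 \arrow Gr(3,V), \qquad p \mapsto \operatorname{span}(F_1^p, F_2^p, F_3^p),$$
where $V = H^0(\PP^2 \times \PP^2, O(1,1))$, and the fibre $\pi_1^{-1}(p)$ is precisely the complete intersection cut out by any generators of $\phi(p)$.

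Let $R' \subset M \subset Gr(3,V)$ denote the locus parametrising complete intersection curves which are reducible or generically non-reduced. By Lemma~\ref{lemma-codim2}, $R'$ has codimension $2$ in $M$. Since $GL(9)$ acts transitively on $Gr(3,V)$, Kleiman's transversality theorem applied to $\phi$ and the general translate $gR'$ shows that the fibre product $\PP^2 \times_{Gr(3,V)} gR'$ has pure dimension
$$2 + (18-2) - 18 = 0$$
or is empty, so that the preimage $\phi^{-1}(gR')$ is a finite subset of $\PP^2$. As observed in the previous proof, translates of $R'$ under $GL(9)$ correspond to arbitrary choices of the defining forms $F_1, F_2, F_3$, so for a general such choice (hence a general $X$) the set $Z := \phi^{-1}(R')$ is finite. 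Taking $U_1 := \PP^2 \setminus Z$, every fibre $\pi_1^{-1}(p)$ with $p \in U_1$ is by construction neither reducible nor generically non-reduced, which is the required conclusion; since each fibre is a Cohen--Macaulay curve, generically reduced is equivalent to reduced, recovering the full statement of Proposition~\ref{prop-goodfibres}.

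The substantive work has already been done in Lemma~\ref{lemma-codim2}; given that input, the remaining task is to check that the earlier Kleiman framework applies verbatim to this slightly different bad locus. The only point worth flagging is that because the expected intersection dimension here is $0$ rather than negative, one must invoke the full strength of Kleiman's theorem (pure dimensionality of the fibre product, used in particular to see that the preimage in $\PP^2$ of the intersection is $0$-dimensional) rather than the weaker generic-emptiness statement that sufficed for the codimension-$7$ case. I do not expect any further technical obstacle.
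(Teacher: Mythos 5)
Your proposal follows essentially the same route as the paper: the classifying morphism $\phi \colon \PP^2 \arrow Gr(3,V)$, the codimension-$2$ bad locus from Lemma~\ref{lemma-codim2}, Kleiman transversality to conclude the preimage is finite, and the identification of $GL(9)$-translates with changes of the defining forms. Your added remarks --- the explicit dimension count $2+(18-2)-18=0$ and the observation that generic reducedness upgrades to reducedness for Cohen--Macaulay fibres --- are correct refinements of the same argument, not a different approach.
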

\begin{proof}
Again we consider the morphism
  \begin{align*}
    \phi \colon \PP^2 &\arrow M \subset Gr(3,9) \\
    p &\mapsto \operatorname{span}(F_1^p,F_2^p,F_3^p)
  \end{align*}
  defined by our choice of 3 forms. Let $S$ denote the subset of $M$ parametrising curves which are reducible or generically non-reduced. Lemma \ref{lemma-codim2} shows that $S$ has codimension 2 in $M$. As before, the general $GL(9)$-translate of $\phi(\PP^2)$ is transverse to $R$ and therefore $R \cap \phi(\PP^2)$ has codimension 2 in $\PP^2$. Again, the general translate equals $\phi(\PP^2)$ for a suitable choice of forms $F_1,F_2,F_3$.  
\end{proof}

{\bf{CONFLICT OF INTEREST}}: The authors have no conflict of interest.


\end{document}